\newcommand{\diesis}{^\#}
\newtheorem{theo}{Theorem}[section]
\newtheorem{lemma}{Lemma}[section]
\newtheorem{prop}{Proposition}[section]
\theoremstyle{definition}
\newtheorem{rem}{Remark}[section]
\numberwithin{equation}{section}
\newcommand{\R}{\mathbb R}
\newcommand{\de}{\partial}
\newcommand{\eps}{\varepsilon}
\newcommand{\ds}{\displaystyle}
\DeclareMathOperator{\esssup}{ess\,sup}
\begin{document}
\title[Constrained Cheeger type inequality]{Symmetry breaking in a constrained Cheeger type
  isoperimetric inequality} 
\author[B. Brandolini,  F. Della Pietra, C. Nitsch and
  C. Trombetti]{
Barbara Brandolini, Francesco Della Pietra,\\ Carlo Nitsch and
Cristina Trombetti}

 \address{
Barbara Brandolini \\
Universit\`a degli Studi di Napoli ``Federico II''\\
Dipartimento di Ma\-te\-ma\-ti\-ca e Applicazioni ``R. Caccioppoli''\\
Complesso Monte S. Angelo - Via Cintia\\
80126 Napoli, Italia.
}
\email{brandolini@unina.it}

\address{
Francesco Della Pietra \\
Universit\`a degli Studi di Napoli ``Federico II''\\
Dipartimento di Ma\-te\-ma\-ti\-ca e Applicazioni ``R. Caccioppoli''\\
Complesso Monte S. Angelo - Via Cintia\\
80126 Napoli, Italia.
}
\email{f.dellapietra@unina.it}

 \address{
Carlo Nitsch \\
Universit\`a degli Studi di Napoli ``Federico II''\\
Dipartimento di Ma\-te\-ma\-ti\-ca e Applicazioni ``R. Caccioppoli''\\
Complesso Monte S. Angelo - Via Cintia\\
80126 Napoli, Italia.
}
\email{c.nitsch@unina.it}

 \address{
Cristina Trombetti \\
Universit\`a degli Studi di Napoli ``Federico II''\\
Dipartimento di Ma\-te\-ma\-ti\-ca e Applicazioni ``R. Caccioppoli''\\
Complesso Monte S. Angelo - Via Cintia \\
80126 Napoli, Italia.
}
\email{cristina@unina.it}

\keywords{Cheeger inequality, optimal shape, symmetry and asymmetry.}
\subjclass[2000]{49Q20, 39B05}
\date{}
\maketitle
\begin{abstract}
The study of the optimal constant $\mathcal K_q(\Omega)$ in the Sobolev inequality 
\[
\|u\|_{L^q(\Omega)} \le \frac{1}{\mathcal K_q(\Omega)}\|Du \|(\R^n),
\qquad 1\le q<1^\ast, 
\]
for BV functions which are zero outside $\Omega$ and
with zero mean value inside $\Omega$,  leads to the definition of a
Cheeger type constant. We are interested in finding the best possible
embedding constant in terms of the measure of $\Omega$ alone. We set
up an optimal shape problem and we completely characterize, on varying
the exponent $q$, the behavior of optimal domains. Among other things
we establish the existence of a threshold value $1\le\tilde q< 1^\ast$
above which the symmetry of optimal domains is broken. Several
differences between the cases $n=2$ and $n\ge 3$ are emphasized.
\end{abstract}

\section{Statement of the problem and main results}
Let $\Omega$ be a bounded, open set of $\R^n$, $n\ge 2$, and let $1\le
q <1^*= \frac{n}{n-1}$.  
Denoting by $BV_0(\Omega)=\left\{u\in BV(\R^n): \, u\equiv 0 \text{ in
  } \R^n\setminus \Omega \,\right\}$ it is well-known that there
exists a constant $\mathcal C_q(\Omega)$ such that 
\begin{equation}\label{sob}
\|u\|_{L^q(\Omega)} \le \frac{1}{\mathcal C_q(\Omega)} \|Du \|(\R^n)
\end{equation}
 for all $u\in BV_0(\Omega)$. Here $||Du||(\R^n)$ denotes the total
 variation of $u$ in $\R^n$.  

\noindent The least possible constant such that \eqref{sob} holds true
is given by 
\begin{equation*}
\mathcal C_q(\Omega)=\min\left\{\frac{
    \|Du\|(\R^n)}{\|u\|_{L^q(\Omega)}}:\; u\in BV_0(\Omega),\,
  u\not\equiv 0 
 \right\},
\end{equation*}
a quantity that is well-known in literature since it coincides with
the so called Cheeger constant \cite{ch} (see also the survey paper \cite{Pa} and the references therein): 
\begin{equation*}
 \mathcal{C}_q(\Omega)= \min\left\{\frac{P(E)}{|E|^\frac1q}:
   E\subseteq \Omega,\, |E| >0 \right\}. 
 \end{equation*}
Here by $P(E)$ and $|E|$ we denote the perimeter and the Lebesgue
measure of $E$ respectively. 

\noindent An elementary scaling argument enforces
$\mathcal{C}_q(\Omega)|\Omega|^{\frac 1q -\frac{n-1}n}$ to be
invariant under dilations, therefore it is possible to optimize such a
product over all bounded, open sets $\Omega$. Indeed an interesting
consequence of the isoperimetric inequality is that 
\begin{equation}\label{cheeger_in}
\mathcal{C}_q(\Omega)|\Omega|^{\frac 1q -\frac{n-1}n}\ge n\omega_n^\frac1n,
\end{equation}
where $\omega_n$ denotes the measure of the unit ball in $\R^n$. Hence
in the class of bounded, open sets of given measure balls minimize
$\mathcal{C}_q(\Omega)$. Furthermore, given any bounded, open set, it holds that
$\Omega$ 
\begin{equation*}
\|u\|_{L^q(\Omega)} \le \frac{|\Omega|^{\frac 1q
    -\frac{n-1}n}}{n\omega_n^\frac1n}\|Du \|(\R^n) 
\end{equation*}
for all $u\in BV_0(\Omega)$.

The study of optimal constants in Sobolev--Poincar\'e inequalities for BV functions has been very popular since several decades.
Many results can be found for instance in \cite{ma}, and more recently in \cite{ci12,dpg2,dpg1,efknt}.

In this paper we consider the following minimization problem
\begin{equation}
  \label{pb}
 \mathcal{K}_q(\Omega)=\min\left\{\frac{
     \|Du\|(\R^n)}{\|u\|_{L^q(\Omega)}}:\; u\in BV_0(\Omega),\,
   u\not\equiv 0, \, \int_\Omega u\, dx=0  
 \right\},
\end{equation}
carrying the Sobolev inequality
 \[
\|u\|_{L^q(\Omega)} \le \frac{1}{{\mathcal K}_q(\Omega)} \|Du \|(\R^n)
\]
holding for functions $u\in BV_0(\Omega)$ having zero mean
value. Since in general ${\mathcal C}_q(\Omega)\le{\mathcal
  K}_q(\Omega)$, in comparison with \eqref{sob}, we are trading the restriction to zero mean value
functions for a better embedding constant. Even in this case, scaling
arguments enforce $\mathcal{K}_q(\Omega)|\Omega|^{\frac 1q
  -\frac{n-1}n}$ to be invariant under dilations and the present paper is devoted to the study of the optimal lower bound in the wake of \eqref{cheeger_in}
  and to a complete characterization of the optimal sets (from now on called ``minimizers'') on which $\mathcal{K}_q(\Omega)|\Omega|^{\frac 1q -\frac{n-1}n}$ achieves the lower bound.
  
\noindent To this aim we rewrite $\mathcal{K}_q(\Omega)$ in terms of geometric
quantities, such as perimeters and measures of subsets of
$\Omega$. For a given bounded open set $\Omega$ we define 
\begin{equation}\label{phi}
\Phi(\Omega)=\{(E_1,E_2): \; E_1,E_2 \subseteq \Omega,\; E_1\cap
E_2=\emptyset,\, |E_1|>0,\,|E_2|>0\}, 
\end{equation}
and prove the following.
\begin{theo}\label{reduction}
If $\Omega$ is a bounded open set of $\R^n$, $n\ge 2$, and
$1\le q <1^\ast$, then
\begin{equation}
  \label{set}
\mathcal{K}_q(\Omega)= \min\left\{
  \frac{\frac{P(E_1)}{|E_1|}+\frac{P(E_2)}{|E_2|} } 
 {({|E_1|}^{1-q}+{|E_2|}^{1-q})^{\frac 1 q}}
:\; (E_1,E_2) \in
\Phi(\Omega)\right\}.
\end{equation}
\end{theo}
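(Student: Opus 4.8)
The plan is to establish the two inequalities $\mathcal{K}_q(\Omega)\le\mathcal M$ and $\mathcal{K}_q(\Omega)\ge\mathcal M$, where $\mathcal M$ denotes the right-hand side of \eqref{set}. For the first one, I would simply feed into \eqref{pb} the test function $u=\frac1{|E_1|}\mathbf 1_{E_1}-\frac1{|E_2|}\mathbf 1_{E_2}$ associated with a pair $(E_1,E_2)\in\Phi(\Omega)$: it lies in $BV_0(\Omega)$, satisfies $\int_\Omega u\,dx=0$ and $\|u\|_{L^q(\Omega)}^q=|E_1|^{1-q}+|E_2|^{1-q}$, and, since $E_1\cap E_2=\emptyset$, its superlevel sets are $E_1$ for values in $(0,1/|E_1|)$ and $\R^n\setminus E_2$ for values in $(-1/|E_2|,0)$, so the coarea formula gives $\|Du\|(\R^n)=\frac{P(E_1)}{|E_1|}+\frac{P(E_2)}{|E_2|}$. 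Taking the infimum over $\Phi(\Omega)$ yields $\mathcal{K}_q(\Omega)\le\mathcal M$.

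For the reverse inequality, fix $u\in BV_0(\Omega)$, $u\not\equiv0$, with $\int_\Omega u\,dx=0$. The Rayleigh quotient in \eqref{pb} is $0$-homogeneous and $\int_\Omega u^+\,dx=\int_\Omega u^-\,dx$, so I may assume $\int_\Omega u^+\,dx=\int_\Omega u^-\,dx=1$. Set $v=u^+$, $w=u^-\in BV_0(\Omega)$ and, for $t,s>0$, $\phi(t)=|\{v>t\}|$, $\psi(s)=|\{w>s\}|$; these are nonincreasing and $\int_0^\infty\phi\,dt=\int_0^\infty\psi\,ds=1$. By the coarea formula $\|Du\|(\R^n)=\int_0^\infty P(\{v>t\})\,dt+\int_0^\infty P(\{w>s\})\,ds$, and by Cavalieri's principle $\|u\|_{L^q(\Omega)}^q=q\int_0^\infty t^{q-1}\phi(t)\,dt+q\int_0^\infty s^{q-1}\psi(s)\,ds$. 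For each $t,s>0$ with $\phi(t),\psi(s)>0$ the pair $(\{v>t\},\{w>s\})$ belongs to $\Phi(\Omega)$ (the two sets are disjoint because $v$ and $w$ are), whence, by definition of $\mathcal M$,
\[
\frac{P(\{v>t\})}{\phi(t)}+\frac{P(\{w>s\})}{\psi(s)}\ \ge\ \mathcal M\,\bigl(\phi(t)^{1-q}+\psi(s)^{1-q}\bigr)^{1/q}.
\]
Multiplying by $\phi(t)\psi(s)\ge0$ and integrating over $(0,\infty)^2$, the normalizations $\int\phi=\int\psi=1$ make the left-hand side collapse exactly to $\|Du\|(\R^n)$, so
\[
\|Du\|(\R^n)\ \ge\ \mathcal M\int_0^\infty\!\!\int_0^\infty\bigl(\phi(t)^{1-q}+\psi(s)^{1-q}\bigr)^{1/q}\phi(t)\psi(s)\,dt\,ds .
\]

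It therefore suffices to prove the analytic inequality
\[
\int_0^\infty\!\!\int_0^\infty\bigl(\phi(t)^{1-q}+\psi(s)^{1-q}\bigr)^{1/q}\phi(t)\psi(s)\,dt\,ds\ \ge\ \|u\|_{L^q(\Omega)} .
\]
I would rewrite its integrand as $\bigl(\phi(t)\psi(s)^q+\phi(t)^q\psi(s)\bigr)^{1/q}$ and apply Minkowski's integral inequality on the disjoint union $X=\{v>0\}\sqcup\{w>0\}$ to the family $F_{t,s}$ equal to $\psi(s)\,\mathbf 1_{\{v>t\}}$ on the first copy and to $\phi(t)\,\mathbf 1_{\{w>s\}}$ on the second: then $\|F_{t,s}\|_{L^q(X)}^q=\phi(t)\psi(s)^q+\phi(t)^q\psi(s)$, while $\int_0^\infty\!\int_0^\infty F_{t,s}\,dt\,ds$ equals $v$ on the first copy and $w$ on the second (this is where the normalizations re‑enter), so that $\bigl\|\int_0^\infty\!\int_0^\infty F_{t,s}\,dt\,ds\bigr\|_{L^q(X)}^q=\|v\|_{L^q}^q+\|w\|_{L^q}^q=\|u\|_{L^q(\Omega)}^q$. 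Minkowski's inequality is precisely the displayed estimate, and it follows that $\|Du\|(\R^n)\ge\mathcal M\,\|u\|_{L^q(\Omega)}$; taking the infimum over $u$ gives $\mathcal{K}_q(\Omega)\ge\mathcal M$, hence equality. Finally, both sides are genuine minima: $\mathcal{K}_q(\Omega)$ is attained by the compact embedding $BV_0(\Omega)\hookrightarrow L^q(\Omega)$ (recall $q<1^\ast$ and $\Omega$ is bounded) together with lower semicontinuity of the total variation, and running the argument above on a minimizer forces the inequalities to be equalities, so that the infimum in \eqref{set} is attained along a pair of superlevel sets of that minimizer.

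The step I expect to be the real obstacle is the analytic inequality: since $\mathcal M$ is a joint minimum over pairs of sets, and not a pointwise (level‑by‑level) quantity as in the unconstrained Cheeger problem, one cannot bound $\|Du\|(\R^n)$ level set by level set; the point is to recognize the double integral as the output of a single application of Minkowski's integral inequality after the algebraic recasting of the integrand, with the auxiliary ``two‑copy'' measure space chosen so that the superposition $\int_0^\infty\!\int_0^\infty F_{t,s}\,dt\,ds$ reproduces $v$ and $w$. The reconstruction of $\|Du\|(\R^n)$ through the weight $\phi(t)\psi(s)$ is the other place where the zero‑mean constraint (via the normalization) is genuinely used.
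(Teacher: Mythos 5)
Your proof is correct, and the two halves split as follows: the easy inequality (testing with $\tfrac{1}{|E_1|}\chi_{E_1}-\tfrac{1}{|E_2|}\chi_{E_2}$) is, up to a harmless rescaling, the same as in the paper, but your proof of the hard inequality $\mathcal K_q(\Omega)\ge\mathcal M$ follows a genuinely different route. The paper first applies H\"older's inequality to bound $\|u\|_{L^q(\Omega)}$ by an expression involving $\int_0^\infty\mu_\pm^{1/q}$, then synchronizes the level sets of $u_+$ and $u_-$ through the mass change of variables $\xi(t)=\int_0^t\mu_+$, $\eta(s)=\int_0^s\mu_-$ (both mapping onto $[0,M]$ with $M=\int u_+=\int u_-$), invokes the two--term integral Minkowski inequality, and concludes by bounding a ratio of integrals over $[0,M]$ from below by the infimum of the pointwise ratios $Q(\{u_+>t(r)\},\{u_->s(r)\})$ along this one--parameter family of competitor pairs. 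You instead integrate the defining inequality $Q(\{v>t\},\{w>s\})\ge\mathcal M$ against the product weight $\phi(t)\psi(s)\,dt\,ds$ over \emph{all} pairs of levels --- the normalization $\int\phi=\int\psi=1$ making the left--hand side collapse exactly to $\|Du\|(\R^n)$ --- and then dominate $\|u\|_{L^q(\Omega)}$ by a single application of the integral Minkowski inequality on the doubled measure space $\{v>0\}\sqcup\{w>0\}$, after recasting the integrand as $(\phi\psi^q+\phi^q\psi)^{1/q}$. Your version dispenses with both the H\"older step and the reparametrization, at the price of the auxiliary two--copy space; the paper's coupling has the advantage of reducing everything to a pointwise ratio along a single curve of level--set pairs. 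A further difference concerns attainment: the paper proves directly that the infimum over $\Phi(\Omega)$ is achieved, via the isoperimetric inequality (to prevent both measures from vanishing along a minimizing sequence) plus $L^1$--compactness and lower semicontinuity of the perimeter, whereas you deduce it a posteriori from the equality case of your chain evaluated at a minimizer of \eqref{pb}. This is legitimate, but it promotes the existence of a $BV$ minimizer (compactness of $BV_0(\Omega)\hookrightarrow L^q(\Omega)$ for $q<1^\ast$) from an afterthought to a needed ingredient, and the final step deserves one more line: the nonnegative integrand with vanishing integral forces $Q(\{v>t\},\{w>s\})=\mathcal M$ for a.e.\ $(t,s)$ in the set $\{\phi\psi>0\}$, which is nonempty because $\int_\Omega u\,dx=0$ and $u\not\equiv0$ guarantee that both $u_+$ and $u_-$ are nontrivial.
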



\noindent Taking advantage of \eqref{set}, we are able to characterize the minimizers.
Astonishingly, the minimization is very sensitive to the choice of $n$
and $q$. A symmetry breaking phenomenon appears above a threshold
value of the exponent $q$. 
Furthermore for certain choices of $n$ and $q$ minimizers are not even
unique. Our main result is the following. 

\begin{theo}
 \label{maintheo} 
For all $n \ge 2$ every minimizer of $\mathcal{K}_q(\Omega)$ in the class of
bounded, open sets with given measure, is  union of two disjoint
balls. Shape and uniqueness of such minimizers depends on $n$ and
$q$. More specifically, there exists $\tilde q=\tilde q(n)\in
]1,1^\ast[$  such that, when $q<\tilde q$, the minimizer is unique and
the two balls composing the minimizer have the same radius, while, when
$q>\tilde q$, the minimizer is unique and the two balls composing the
minimizer have different radii. Moreover:
\begin{enumerate}
\item  If $n=2$, then $\tilde q=\frac{7}{4}$, the minimizer is unique
  even at $q=\tilde q$ and consists in the union of two disjoint  balls with equal
  radii. 
\item  For all $n\ge 3$,  then  
the minimizer is not unique at $q=\tilde q$, indeed there are exactly
two minimizers one of which is  the union of two disjoint balls with
equal radii. 
  \end{enumerate}
\end{theo}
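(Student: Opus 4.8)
The plan is to combine the geometric representation \eqref{set} of Theorem~\ref{reduction} with the isoperimetric inequality to reduce the problem first to two balls and then to a one-variable minimization, and finally to analyze that one-variable function in detail.

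\textbf{Reduction to two balls.} Fix $|\Omega|$ and let $(E_1,E_2)\in\Phi(\Omega)$ realize the minimum in \eqref{set}. The isoperimetric inequality gives $P(E_i)/|E_i|\ge P(B_i)/|B_i|$, where $B_i$ is a ball with $|B_i|=|E_i|$, while the denominator in \eqref{set} depends on $E_1,E_2$ only through $|E_1|,|E_2|$; hence we may take $E_i$ to be balls. If $|E_1|+|E_2|<|\Omega|$, passing to $B_1\cup B_2$ and then rescaling to restore the measure strictly lowers the quotient: under a dilation $E_i\mapsto\lambda E_i$ the ratio in \eqref{set} scales like $\lambda^{(n-1)-n/q}$, and the exponent is negative precisely because $q<1^*$. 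Therefore every minimizer equals, up to a set of measure zero, a disjoint union $B_1\sqcup B_2$ with $|B_1|+|B_2|=|\Omega|$; this proves the first assertion.

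\textbf{One-variable reduction and the candidate threshold.} Writing $|B_1|=t|\Omega|$, $|B_2|=(1-t)|\Omega|$ with $t\in(0,1)$ and using $P(B_i)/|B_i|=n\omega_n^{1/n}|B_i|^{-1/n}$, the quotient in \eqref{set} at $(B_1,B_2)$ equals $n\omega_n^{1/n}\,g(t)$, where
\[
g(t)=\frac{t^{-1/n}+(1-t)^{-1/n}}{\bigl(t^{1-q}+(1-t)^{1-q}\bigr)^{1/q}} ;
\]
since by the same considerations this pair is optimal for such $\Omega$, minimizing $\mathcal K_q$ over sets of given measure amounts to minimizing $g$ on $(0,1)$. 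As $q<1^*$ one checks $g(t)\to+\infty$ when $t\to0^+$, so the minimum is attained in the interior; moreover $g(t)=g(1-t)$, so $t=\tfrac12$ is always a critical point. Putting $t=(1+s)/2$ and expanding,
\[
\log g=\mathrm{const}+\tfrac12\Bigl(\tfrac{n+1}{n^2}-(q-1)\Bigr)s^2+O(s^4),
\]
so $t=\tfrac12$ is a strict local minimum for $q<\hat q:=1+\tfrac{n+1}{n^2}=\tfrac{n^2+n+1}{n^2}$ and a local maximum for $q>\hat q$ (one verifies $1<\hat q<1^*$, and $\hat q=\tfrac74$ for $n=2$). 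At $q=\hat q$ the quadratic term vanishes and the coefficient of $s^4$ is computed to be $\dfrac{(n+1)\bigl(2-(n-1)^2\bigr)}{12\,n^6}$, which is positive if and only if $n=2$.

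\textbf{Global analysis and symmetry breaking.} With $x=(1-t)/t\in(1,\infty)$ for $t\in(0,\tfrac12)$, the equation $g'(t)=0$ simplifies, after elementary manipulations, to
\[
\frac1n\cdot\frac{x^{1/n+1}-1}{x^{1/n}+1}=\frac{q-1}{q}\cdot\frac{x^{q}-1}{x^{q-1}+1},
\]
always solved by $x=1$ ($t=\tfrac12$); a non-symmetric minimizer corresponds to a further root in $(1,\infty)$. Let $D(x)$ be the difference of the two sides: then $D(1)=0$, $D(x)/(x-1)\to\tfrac1n-\tfrac{q-1}{q}=\tfrac{n-(n-1)q}{nq}>0$ as $x\to\infty$, and $D'(1)=\tfrac12\bigl(\tfrac{n+1}{n^2}-(q-1)\bigr)$. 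For $n=2$ one shows by a direct study of $D$ — using, at $q=\tfrac74$, the substitution $x=w^4$, which makes the identity polynomial in $w$ — that $D>0$ on $(1,\infty)$ for every $q\le\tfrac74$; hence $t=\tfrac12$ is the unique minimizer for $q\le\tfrac74$, which is case (1), and for $q>\tfrac74$, $D'(1)<0$ forces $D<0$ just to the right of $1$, so together with monotonicity of $D(x)/(x-1)$ there is exactly one further root, i.e. a unique non-symmetric minimizer. For $n\ge3$, since the $s^4$-coefficient at $q=\hat q$ is negative, $t=\tfrac12$ is already a local maximum there, so the actual threshold $\tilde q=\tilde q(n)$ lies in $(1,\hat q)$ and is the value at which the symmetric critical value equals the non-symmetric one, $g_{\tilde q}(1/2)=g_{\tilde q}(t_0)$ with $t_0\ne\tfrac12$; for $q<\tilde q$ the symmetric value is strictly smallest, for $q>\tilde q$ it is strictly beaten, and at $q=\tilde q$ the two coincide while no third critical value ties with them, so there are exactly two minimizing shapes, one of them the union of two balls with equal radii — case (2). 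In both regimes one also has to bound the total number of critical points of $g$ (at most the five permitted by the underlying pitchfork-plus-saddle-node picture) so that the enumeration of minimizers is complete.

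\textbf{Main obstacle.} The two reductions are routine; the core of the proof is the fine analysis of the one-variable function $g$, that is, counting exactly the roots of $D(x)$ in $(1,\infty)$ for all admissible $(n,q)$. This requires monotonicity (or convexity) of the auxiliary ratio $D(x)/(x-1)$, the polynomial reduction for $n=2$ at $q=\tfrac74$, and the sign of the $s^4$-coefficient at $\hat q$, which both produces the $n=2$ versus $n\ge3$ dichotomy and pins down the implicitly defined threshold $\tilde q(n)$.
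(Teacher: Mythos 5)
Your overall strategy coincides with the paper's: reduce to two disjoint balls by the isoperimetric inequality plus strict monotonicity under dilation, pass to a one--variable function symmetric about the equal--radii configuration, locate the candidate threshold $\bar q=1+\frac1n+\frac1{n^2}$ by the second--derivative test at the symmetric point (your $\hat q$ is exactly this value, and equals $\frac74$ for $n=2$), and distinguish $n=2$ from $n\ge3$ by the sign of the fourth--order term there. However, the two steps that actually carry the theorem are left as assertions. First, the exact count of critical points: your entire enumeration of minimizers rests on ``monotonicity (or convexity) of the auxiliary ratio $D(x)/(x-1)$'', which you do not prove, and which is not obvious for the rational--power expression you write down. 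The paper gets this under control by a specific change of variables, $x=\frac12\log\bigl(r_1^n/(1-r_1^n)\bigr)$, after which $\partial_x f_n(x,q)$ factors as a positive function times a linear combination of exactly three hyperbolic sines, and a short ODE--type lemma shows such a combination has at most two positive zeros. Without an ingredient of this kind your ``at most the five permitted by the pitchfork--plus--saddle--node picture'' is a picture, not a proof, and the uniqueness claims in both regimes are unsupported.

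Second, the single--crossing structure in $q$. You define $\tilde q$ as the value where the symmetric and non--symmetric critical values coincide, and then assert that the symmetric value ``is strictly smallest'' for all $q<\tilde q$ and ``strictly beaten'' for all $q>\tilde q$. A priori there could be several such crossings, and you also give no lower bound showing $\tilde q>1$. The paper closes this by proving that for every fixed $x>0$ both $f_n(x,\cdot)$ and $\partial_x f_n(x,\cdot)$ are decreasing in $q$ on $[1,1^\ast[$, which forces positive global minimum points to move monotonically to the right and yields a unique threshold; the lower bound $\tilde q>1+\frac1n$ comes from the explicit identity $f_n\bigl(x,1+\frac1n\bigr)=[\cosh x]^{1/(n^2+n)}[\cosh(x/n)]^{1/(n+1)}$, manifestly increasing on $[0,\infty[$. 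Likewise your $n=2$, $q=\frac74$ claim (``the substitution $x=w^4$ makes the identity polynomial'') corresponds to the paper's factorization $A_2(x,\frac74)=\frac27\sinh^3(\frac x4)\bigl\{6\cosh x+2\cosh(\frac32x)-1\bigr\}>0$, but as written it is only a plan. In short: the skeleton matches the published proof, but the critical--point count and the $q$--monotonicity are genuine gaps, and they are precisely where the work lies.
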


\begin{rem}\label{mainrem}
We point out some additional properties that will be deduced during
the proof of the 
Theorem \ref{maintheo}. Assume that we work with the class of bounded, open sets of given measure. When $n=2$, the radii of the
balls composing the unique minimizer change continuously for
$q\in[1,2[$ and the largest one is nondecreasing with respect to $q$. 
The case $n\ge 3$ shows some differences. Bearing in mind that the
exact value $\tilde q$ is not explicitly given, we can bound it from
above and below by $1+\frac1n +\frac1{n^2}$ and $1+\frac1n$
respectively. More important, crossing the threshold value $\tilde q$
the minimizer abruptly jumps from two balls of equal radii to two balls
of unequal radii. For $q= \tilde q$ minimizing pairs of balls of equal
and unequal radii coexist. Considering the minimizing pairs of unequal
radii, for $q\ge \tilde q$, the radius of the largest ball
continuously increases with respect to $q$. \\ 
Finally, regardless the value $n\ge2$ the minimizing pair always
degenerates to one ball as $q\to 1^\ast$. 
\end{rem}



A different point of view to look at our problem consists in considering the minimization in \eqref{pb} as relaxed form of 
\begin{equation}\label{pbW}
\mathcal{K}_q(\Omega)=\inf\left\{ \frac{\ds\int_\Omega |\nabla u|
      dx}{\left(\ds\int_\Omega|u|^q dx\right)^{\frac 1 q}}:\;
    u\in W_0^{1,1}(\Omega),\; u\not\equiv 0\text{ and }\int_\Omega
    u\,dx =0\right\}. 
\end{equation}
In this case we can address to $\mathcal{K}_q(\Omega)$ as the ``twisted eigenvalue'' of the $1$-Laplacian or ``twisted Cheeger constant'',
that means $\mathcal{K}_q(\Omega)$ is the $BV$ counterpart of the so-called
twisted eigenvalue for the Laplacian:   
\begin{equation*}
\lambda^T(\Omega)=\min\left\{ \frac{\ds\int_\Omega |\nabla u|^2
      dx}{\ds\int_\Omega|u|^2 dx}:\;
    u\in H_0^1(\Omega),\; u\not\equiv 0\text{ and }\int_\Omega u\,dx
    =0\right\}. 
\end{equation*}
To our knowledge the term ``twisted eigenvalue'' was first introduced by Barbosa and B\'erard in
\cite{bb00}. Later Freitas and Henrot in \cite{fh04} employed
symmetrization arguments to show that the pairs of disjoint balls of equal
radii are the unique minimizers of $\lambda^T(\Omega)$ among all
bounded, open sets of given measure. For the interested reader,
generalization of the twisted Laplacian eigenvalue in different directions have already
been studied for instance in \cite{bcm,chp12,n12,bfnt11}. 

In the spirit of \cite{fh04}, one might expect that also minimizers for
$\mathcal{K}_q(\Omega)$ 
are pairs of disjoint balls of equal radii, and Theorem \ref{maintheo} contradicts this intuition.

The picture that we get is much more similar to the one obtained for
certain $1$-dimensional Wirtinger inequality in \cite{dgs,bkn98,n02,cd03,gn11} where the occurrence of symmetric and asymmetric minimizers have been 
completely settled. In dimension 
greater than $1$, symmetry breaking for $p$-Laplacian twisted
eigenvalue problems, for certain range of exponents, 
have been observed in an interesting remark by A. I. Nazarov recently
appeared in \cite{n12}.  

\section{Proof of the Theorem \ref{reduction}: reduction to
  characteristic functions} 
The main idea is to show that it is possible to study problem \eqref{pb} by
considering only test functions whose positive and negative parts
are characteristic functions up to multiplicative factors.  
Let us introduce the following notation. For any $u \in BV_0(\Omega)$ we set 
\[
  F_q(u)=\frac{\|Du\|(\R^n)}{\|u\|_{L^q(\Omega)}}.  
\]
Clearly, if as usual  $\chi_{E}$ denotes the characteristic function of a set
$E\subseteq \R^n$, recalling \eqref{phi},
\[
F_q\left(|E_2|\chi_{E_1}-|E_1|\chi_{E_2}\right)=\frac{\frac{P(E_1)}{|E_1|}+\frac{P(E_2)}{|E_2|} } 
 {({|E_1|}^{1-q}+{|E_2|}^{1-q})^{\frac 1 q}}, \qquad (E_1,E_2)\in \Phi(\Omega).
\] 
We denote
\[
Q(E_1,E_2)=\frac{\frac{P(E_1)}{|E_1|}+\frac{P(E_2)}{|E_2|} } 
 {({|E_1|}^{1-q}+{|E_2|}^{1-q})^{\frac 1 q}}
\]
and we define
  \begin{equation}
    \label{eq:setpb}
  \ell_q(\Omega)= \inf_{(E_1,E_2)\in  \Phi(\Omega)} Q(E_1,E_2).  
  \end{equation}
First we  prove that the infimum in \eqref{eq:setpb} is attained.
 The classical isoperimetric inequality
  implies that 
  \[
  Q(E_1,E_2) \ge n\omega_n^{\frac 1 n} \frac{{|E_1|}^{-\frac 1
      n}+{|E_2|}^{-\frac 1 n}}{({|E_1|}^{1-q}+{|E_1|}^{1-q})^{\frac 1 q}},
  \]
  and, being $1\le q<1^\ast$, the right-hand side diverges as
  $(|E_1|,|E_2|)\rightarrow (0,0)$. Hence, if $(E^k_1,E_2^k)$ is a
  sequence of couple of 
  domains in $\Phi(\Omega)$ which minimizes \eqref{eq:setpb} as
  $k\rightarrow +\infty$, the sequences $\chi_{E^k_1}$, $\chi_{E^k_2}$ are
  bounded in $BV$ and, up to subsequences, strongly converge in
  $L^1(\Omega)$. Then the lower semicontinuity of the perimeter along
  these sequences guarantees that the infimum in \eqref{eq:setpb} is attained.
  
  Now we show that
  $\mathcal{K}_q(\Omega)=\ell_q(\Omega)=\displaystyle\min_{(E_1,E_2)
    \in \Phi(\Omega)} Q(E_1,E_2)$. By choosing $\tilde
  u=|E_2|\chi_{E_1}-|E_1|\chi_{E_2}$, with $(E_1,E_2)\in
  \Phi(\Omega)$, we have that $\ds\int_\Omega 
  \tilde u\,dx=0$. Thus $\tilde u $ is an admissible test function in
  \eqref{pb}, and $F_q(\tilde u)= Q(E_1,E_2)$. Hence, we have that
  \[
  \mathcal{K}_q(\Omega) \le \ell_q(\Omega).
  \]
  In order to conclude the proof, we have to show that, for any
  admissible function $u \in BV_0(\Omega)$,
  \[
  F_q(u) \ge \ell_q(\Omega).  
  \]
  Using standard notation, let $u=u_+-u_-$, where $u_+$ and $u_-$
  are, respectively, the positive and 
  negative part of $u$, and set 
  $\Omega_\pm={\rm spt\, } u_\pm$.

\noindent Clearly  $\ds\int_{\Omega_+}
  u_+ dx=\ds\int_{\Omega_-}u_- dx$. Moreover being $u\not\equiv 0$
  implies that $|\Omega_\pm|>0$. Let
  \[
  \mu_+(t)= |\{u_+>t\}|,\quad \mu_-(t) =|\{u_->t\}|.
  \]
 By the Fubini Theorem and the H\"older inequality, we get that
  \begin{multline*}
    \int_{\Omega^+} u_+^q dx =
    \int_\Omega \left( u_+(x)^{q-1} \int_0^{+\infty}
      \chi_{\{u_+>t\}}(x)dt\right)dx= \\
   = \int_0^{+\infty} \left( \int_{\{u_+>t\}}  u_+^{q-1}
  dx \right) dt \le \left(
  \int_{\Omega_+}  u_+^{q} dx \right)^{1-\frac 1 q}\int_0^{+\infty} \
\mu_+(t)^{\frac 1 q} dt,
\end{multline*}
and the same relation holds for $u_-$. Using the above
inequality we deduce that
\begin{equation}
  \label{eq:1}
\left(\int_\Omega {|u|}^q dx\right)^{\frac 1 q} \le \left[\left(
  \int_0^{\esssup u_+} \mu_+(t)^{\frac 1 q} dt \right)^q+ \left(
  \int_0^{\esssup u_-} \mu_-(s)^{\frac 1 q} ds \right)^q \right]^{\frac 1
q}.
\end{equation}
Now we perform the change of variables
\[
\xi(t) = \int_0^t \mu_+(\sigma) d\sigma,\quad t \in [0,\esssup u^+],
\]
and
\[
\eta(s) = \int_0^s \mu_-(\tau) d\tau,\quad s \in[0,\esssup u^-],
\]
respectively, in both integrals in the right-hand side of
\eqref{eq:1}. The functions $\xi$ and $\eta$ are strictly increasing and,
being $\int_{\Omega^+} u_+\, dx=\int_{\Omega^-} u_-\, dx:=M$, we 
have that $\xi(t)\le M=\xi(\esssup u^+)$ and $\eta(s)\le 
M=\eta(\esssup u^-)$. Hence, from 
\eqref{eq:1} and the Minkowski inequality (see for example
\cite[Theorem 202, p. 148]{hlp}) it follows that 
\begin{multline}
  \label{eq:2}
\left(\int_\Omega {|u|}^q dx\right)^{\frac 1 q} \le \left[\left(
  \int_0^{M} \mu_+(t(\xi))^{\frac{1-q}{q}} d\xi \right)^q+ \left(
  \int_0^{M} \mu_-(s(\eta))^{\frac{1-q}{q}} d\eta \right)^q
\right]^{\frac 1 q}
\le\\
\le 
  \int_0^{M}
  \left[
    \mu_+(t(r))^{{1-q}} + \mu_-(s(r))^{{1-q}}\right]^{\frac 1 q}
  dr.
\end{multline}
On the other hand,
denoting by $p_\pm(t)=P(\{u_\pm>t\})$ for $t\ge0$, 
the co-area formula for BV functions yields 
\begin{multline}
  \label{eq:3}
  \|Du\|(\R^n)=\|Du_+\|(\R^n)+\|Du_-\|(\R^n)
 \\= \int_0^{+\infty} p_+(t)\,dt + \int_0^{+\infty} p_-(s)\, ds = 
  \int_0^M \left[
    \frac {p_+(t(r))}{\mu_+(t(r))} + \frac{p_-(s(r))}{\mu_-(s(r))}
    \right]dr,
\end{multline}
where we performed the change of variables $t=t(\xi)$, $s=s(\eta)$ defined
above. Finally, combining \eqref{eq:2} and \eqref{eq:3} we have
\begin{multline*}
F_q(u)\ge \frac{\int_0^M \big[
    \frac {p_+(t(r))}{\mu_+(t(r))} + \frac{p_-(s(r))}{\mu_-(s(r))}
    \big]dr}{ \int_0^{M}
  \big[
    \mu_+(t(r))^{{1-q}} + \mu_-(s(r))^{{1-q}}\big]^{\frac 1 q}
    dr } \ge
    \\[.2cm] \ge \inf_{0<r<M} \frac{  \frac {p_+(t(r))}{\mu_+(t(r))} +
    \frac{p_-(s(r))}{\mu_-(s(r))} }{\big[ \mu_+(t(r))^{{1-q}} +
    \mu_-(s(r))^{{1-q}}\big]^{\frac 1 q} } 
  \ge
  \inf_{(E_1,E_2)\in\Phi(\Omega)} Q(E_1,E_2)=\ell_q(\Omega),
\end{multline*}
and this concludes the proof.

\section{Proof of Theorem \ref{maintheo}}
We split the proof into two subsections. In the first one we prove that the minimum of $\mathcal{K}_q(\Omega)$ among
bounded, open sets of given measure is attained at the union of two disjoint
balls. In the second subsection we characterize the minimizers.
\subsection{An isoperimetric inequality for $\mathcal{K}_q(\Omega)$}
 We denote by $\mathcal B(t)$ the family of sets with measure $t$
 which are union of two disjoint balls.  
\begin{prop}
  \label{bound}
Let $\Omega$ be a
 bounded, open set of $\R^n$, with $\Omega\not\in\mathcal
 B(|\Omega|)$ and $1\le q <1^\ast$. 
There  exists a set $ \tilde\Omega=\tilde B_1 \cup \tilde B_2\in
\mathcal B(|\Omega|)$,  
 such that 
 \begin{equation}\label{1}
  \mathcal{K}_q(\Omega)> \mathcal{K}_q(\tilde\Omega)=\min_{A\in
    \mathcal B(|\Omega|)} 
  \mathcal{K}_q(A). 
\end{equation}
  Moreover
\begin{equation}\label{2}
  \mathcal{K}_q(\tilde \Omega)=Q(\tilde B_1,\tilde B_2).
\end{equation}
\end{prop}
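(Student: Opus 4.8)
\medskip
\noindent\textbf{Proof strategy.}
The plan is to run the classical ``ballify and rearrange'' argument through the reduction formula \eqref{set}, using the isoperimetric inequality and the scaling invariance. Introduce the benchmark
\[
m(t)=\inf\bigl\{Q(B_1,B_2):\ B_1,B_2\ \text{disjoint balls},\ |B_1|+|B_2|=t\bigr\},
\]
and observe that, since $Q$ depends on the pair only through the $P(B_i)$ and the $|B_i|$, two balls of prescribed measures can always be translated apart without changing $Q$. A direct computation of $Q(\lambda B_1,\lambda B_2)$ together with $|\lambda B_1\cup\lambda B_2|=\lambda^n(|B_1|+|B_2|)$ yields $m(t)=m(1)\,t^{\frac{n-1}{n}-\frac1q}$, and the exponent $\frac{n-1}{n}-\frac1q$ is negative precisely because $q<1^\ast$; hence $m$ is strictly decreasing. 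The first key step is the \emph{core estimate}: for every bounded open set $\Omega$ one has $\mathcal{K}_q(\Omega)\ge m(|\Omega|)$. Indeed, by Theorem \ref{reduction} there is $(E_1,E_2)\in\Phi(\Omega)$ with $\mathcal{K}_q(\Omega)=Q(E_1,E_2)$; replacing each $E_i$ by a ball $B_i^\ast$ with $|B_i^\ast|=|E_i|$, the isoperimetric inequality gives $P(B_i^\ast)\le P(E_i)$, so $Q(B_1^\ast,B_2^\ast)\le Q(E_1,E_2)$ (the denominator of $Q$ depends only on the unchanged measures). Translating $B_1^\ast,B_2^\ast$ apart makes them admissible for $m(|E_1|+|E_2|)$, and since $|E_1|+|E_2|\le|\Omega|$ and $m$ decreases, $\mathcal{K}_q(\Omega)=Q(E_1,E_2)\ge m(|E_1|+|E_2|)\ge m(|\Omega|)$.

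\medskip
\noindent\textbf{Attainment of $m(|\Omega|)$ and proof of \eqref{1}--\eqref{2}.}
Next I would show the infimum defining $m(|\Omega|)$ is a minimum. Parametrizing disjoint balls of total measure $|\Omega|$ by their radii $(r_1,r_2)$ subject to $\omega_n(r_1^n+r_2^n)=|\Omega|$, one has
\[
Q(B_{r_1},B_{r_2})=\frac{\frac{n}{r_1}+\frac{n}{r_2}}{\bigl((\omega_n r_1^n)^{1-q}+(\omega_n r_2^n)^{1-q}\bigr)^{1/q}} .
\]
Letting one radius tend to $0$ (the other staying bounded by the constraint), this quantity diverges: the denominator is the constant $2$ when $q=1$, while for $q>1$ a short rate computation shows the blow-up, using once more $q<1^\ast$. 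Hence near-minimizers have both radii bounded away from $0$ and from $\infty$, so by continuity the minimum is attained at some $(\tilde r_1,\tilde r_2)$ with $\tilde r_i>0$; set $\tilde\Omega=\tilde B_1\cup\tilde B_2\in\mathcal{B}(|\Omega|)$, so that $Q(\tilde B_1,\tilde B_2)=m(|\Omega|)$. Since $(\tilde B_1,\tilde B_2)\in\Phi(\tilde\Omega)$, Theorem \ref{reduction} gives $\mathcal{K}_q(\tilde\Omega)\le Q(\tilde B_1,\tilde B_2)$, while the core estimate applied to $\tilde\Omega$ gives $\mathcal{K}_q(\tilde\Omega)\ge m(|\tilde\Omega|)=m(|\Omega|)$; this proves \eqref{2}. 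Moreover, for any $A\in\mathcal{B}(|\Omega|)$ the core estimate yields $\mathcal{K}_q(A)\ge m(|A|)=m(|\Omega|)=\mathcal{K}_q(\tilde\Omega)$, so $\mathcal{K}_q(\tilde\Omega)=\min_{A\in\mathcal{B}(|\Omega|)}\mathcal{K}_q(A)$.

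\medskip
\noindent\textbf{Strict inequality.}
To upgrade $\mathcal{K}_q(\Omega)\ge m(|\Omega|)$ to the strict inequality in \eqref{1} when $\Omega\notin\mathcal{B}(|\Omega|)$, I would trace the equality cases of the core estimate. If $\mathcal{K}_q(\Omega)=m(|\Omega|)$, then $P(B_i^\ast)=P(E_i)$ for $i=1,2$, so each $E_i$ is, up to a null set, a ball; and $m(|E_1|+|E_2|)=m(|\Omega|)$ forces $|E_1|+|E_2|=|\Omega|$ by strict monotonicity of $m$. As $E_1,E_2$ are disjoint subsets of $\Omega$ with $|E_1\cup E_2|=|\Omega|$, the set $\Omega$ would coincide, up to a Lebesgue-null set, with a union of two disjoint balls, contradicting $\Omega\notin\mathcal{B}(|\Omega|)$. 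Hence $\mathcal{K}_q(\Omega)>m(|\Omega|)=\mathcal{K}_q(\tilde\Omega)$. The technical heart of the argument is the degeneration analysis in the second step, where the hypothesis $q<1^\ast$ enters essentially to force $Q(B_{r_1},B_{r_2})\to+\infty$ as a radius vanishes, thus giving compactness of the minimizing configurations; the remaining work is the bookkeeping of equality cases needed for strictness.
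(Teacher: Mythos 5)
Your proof is correct, and it rests on the same two ingredients as the paper's argument: the reduction of Theorem~\ref{reduction}, the isoperimetric inequality applied to each $E_i$, and the strict monotonicity of $Q$ under dilations of the pair (which is exactly where $q<1^\ast$ enters). The packaging, however, is genuinely different and in one respect cleaner. By introducing the benchmark $m(t)=m(1)\,t^{\frac{n-1}{n}-\frac1q}$ with its explicit power law, you obtain \eqref{2} for free from the sandwich $m(|\Omega|)\le \mathcal K_q(\tilde\Omega)\le Q(\tilde B_1,\tilde B_2)=m(|\Omega|)$, whereas the paper proves \eqref{2} by a separate contradiction argument showing that any optimal pair inside $\tilde\Omega$ must consist of the two balls themselves (a slightly stronger fact than \eqref{2} requires). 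You also make explicit the attainment of $\min_{A\in\mathcal B(|\Omega|)}\mathcal K_q(A)$ via the degeneration analysis as one radius vanishes; the paper asserts the existence of $\tilde\Omega$ at this point, relying on the divergence estimate already recorded in the proof of Theorem~\ref{reduction} and on the later one-dimensional reduction to $f_n(\cdot,q)$. For the strict inequality \eqref{1} the two arguments are contrapositives of one another: the paper splits into the cases ``$E_1,E_2$ are not both balls'' and ``they are balls, hence $|E_1|+|E_2|<|\Omega|$ and a dilation strictly decreases $Q$'', while you trace the equality cases of the single chain $\mathcal K_q(\Omega)\ge m(|\Omega|)$; both versions use, at the same (standard, up-to-null-sets) level of rigor, the fact that two disjoint balls exhausting the measure of the open set $\Omega$ force $\Omega\in\mathcal B(|\Omega|)$.
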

\begin{proof}
Let $u$ be a minimizer for \eqref{pb}. By \eqref{set} there
exists a couple $(E_1, E_2)\in \Phi(\Omega)$ such that
\[
\mathcal{K}_q(\Omega)=Q(E_1,E_2).
\]
The standard isoperimetric inequality implies that
\begin{equation}
  \label{eq:4}
\mathcal{K}_q(\Omega)=Q(E_1,E_2) \ge Q(E_1\diesis,E_2\diesis)\ge
\min_{(F_1,F_2)\in 
  \Phi( A) } Q(F_1,F_2)=\mathcal{K}_q( A),
\end{equation}
where $E_1\diesis,E_2\diesis$ are two disjoint balls such that
$|E_i\diesis|=|E_i|$, $i=1,2$, $ A=B_1\cup B_2\in \mathcal
B(|\Omega|)$, and 
$E_i\diesis\subseteq B_i$, for $i=1,2$. Then there exists $\tilde
\Omega=\tilde B_1 \cup \tilde B_2 \in  \mathcal 
B(|\Omega|)$ such that
\begin{equation}
  \label{eq:5}
  \mathcal{K}_q(\Omega)\ge \mathcal{K}_q( A)\ge \min_{A\in \mathcal
    B(|\Omega|)} 
  \mathcal{K}_q(A) =\mathcal{K}_q(\tilde \Omega).
\end{equation}
Clearly, the first inequality in \eqref{eq:4} holds as an equality if
 and only if $E_1,E_2$ are balls. In this case, since $\Omega\not\in
 \mathcal B(|\Omega|)$, then $|E_1|+|E_2|<|\Omega|$.
As matter of fact, it is easy to see that, being $q<1^\ast$,
$Q(E_1,E_2)$ is strictly decreasing with respect to homotheties of
$E_1\cup E_2$. This implies that the second inequality in
 \eqref{eq:5} is strict, and the proof of \eqref{1} is completed. 
 
 \noindent Now suppose that $\mathcal{K}_q(\tilde \Omega)=Q(\tilde
 E_1, \tilde E_2)$. The couple $(\tilde E_1,\tilde E_2)\in \Phi(\tilde
 \Omega)$ is such 
  that each $\tilde E_i$ is contained just in one ball. Indeed, if for
  example $\tilde E_1\cap \tilde B_1=\tilde E^a_{1}$, $\tilde E_1\cap
  \tilde B_2=\tilde E^b_{1}$, both with 
  positive measure, and $\frac{P(\tilde E^a_1)}{|\tilde E^a_1|}\le
  \frac{P(\tilde E^b_1)}{|\tilde E^b_1|}$, we have that 
  \[
  \frac{P(\tilde E_1)}{| \tilde E_1|} =
  \frac{P(\tilde E_1^a)+P(\tilde E_1^b)}{|\tilde E_1^a|+|\tilde
    E_1^b|}\ge 
  \frac{P(\tilde E_1^a)}{|\tilde E_1^a|},
  \]
  which implies, being $|\tilde E_1|>\max\{|\tilde E_1^a|,|\tilde
  E_1^b|\}$, that  
 $ Q(\tilde E_1,\tilde E_2)>Q(\tilde E_1^a, \tilde E_2)$ contradicting
 the minimality of $(\tilde E_1, \tilde E_2).$ 
 Now suppose that $\tilde E_1\cup \tilde E_2\neq \tilde B_1\cup \tilde
 B_2$.  
 Then $|\tilde E_1|+|\tilde E_2|<|\Omega|$. Moreover, by the standard
 isoperimetric inequality, 
 \[
  Q(\tilde E_1,\tilde E_2) \ge Q(\tilde E_1\diesis, \tilde
  E_2\diesis). 
 \]
 Finally, being $Q$ strictly monotone with respect to the
 homotheties,  there exist two balls $F_1$ and $F_2$ such that
 $|F_1|+|F_2|=|\Omega|$ and 
 \[
 Q(\tilde E_1\diesis, \tilde E_2\diesis) > Q(F_1,F_2),
 \]
contradicting the minimality of $\tilde \Omega = \tilde B_1 \cup
\tilde B_2$. 
\end{proof}

\subsection{Properties of the minimizers and symmetry breaking}

Since the quantity $\mathcal{K}_q(\Omega)|\Omega|^{\frac 1q
  -\frac{n-1}n}$ is invariant under dilations, we are free to
arbitrarily fix the measure of $\Omega$. Indeed the shape of
minimizers of $\mathcal{K}_q(\Omega)$ under measure constraint is not
affected by the measure chosen.  

For simplicity we will restrict our analysis to the case
$|\Omega|=\omega_n$ (the measure of the unit ball in $\R^n$). Our
minimization problem is 
\[
\min_{|\Omega|=\omega_n} \mathcal{K}_q(\Omega) 
\]
and Proposition \ref{bound} implies that the study
 reduces to the study of a one-dimensional minimum problem. More
 precisely we have to minimize 
\[
 Q(B_1,B_2)=n\omega_n^{1-\frac 1 q}
\frac{ r_1^{-1}+r_2^{-1} }
{ \left( r_1^{-n(q-1)}+r_2^{-n(q-1)} \right)^{\frac 1 q} }, \]
over all possible pairs of balls $B_1$ and $B_2$ of radii $r_1$ and
$r_2$ respectively, under the restriction $r_1^n+r_2^n=1$.

\noindent We introduce the new
variable \[x=\frac12\log\left(\frac{r_1^n}{1-r_1^n}\right).\] The
value $x=0$ corresponds to $r_1=r_2$ and $x$ is a monotone increasing
function of $r_1$ from $-\infty$ to $+\infty$, as $r_1$ goes from $0$
to $1$.    
We have,
\[
\min_{|\Omega|=\omega_n}\mathcal{K}_q(\Omega) = n2^{\frac 1
  n}\omega_n^{1-\frac 1 q} \min_{x\in \R} f_n(x,q), 
\]
where
\begin{equation}
\label{fq}
f_n(x,q)=\Big[\cosh x\Big]^{\frac 1 q + \frac 1 n - 1 }
\Big[\cosh \left(\frac x n\right)\Big]\Big[\cosh
(x(q-1))\Big]^{-\frac 1 q}. 
\end{equation}

\noindent For every $q\in[1,1^\ast[$, $f_n\left(0,q\right)=1$ and
obviously $f_n(x,q)$ is symmetric about $x=0$. Therefore we also have
$\partial_xf_n$ vanishing at $x=0$, in fact two balls with equal radii
are always a stationary point of the functional
$Q(B_1,B_2)$. Moreover, $f_n(x,q)$ diverges for $|x|\rightarrow
\infty$. The behavior of $f_n(\cdot,q)$ is very sensitive to the values of $n$ and $q$ as shown in Figures \ref{fig1}-\ref{fig2}.

%

All the statements in Theorem \ref{maintheo} are consequences of
several claims we are going to prove. 

  \medskip

{\bf Claim 1.} {\em For any given $n\ge 2$ and any given
  $q\in[1,1^\ast[$, the function $f_n(\cdot,q)$ has at most two local
  minimum points in $[0, \infty[$, and not more than one in
  $]0,\infty[$.} 

 
 \medskip
 
\noindent {\it Proof of Claim 1}. Differentiating $f_n(x,q)$ with
respect to $x$, we have  
\begin{multline*}
\de_x\, f_n(x,q) = c_n(x,q) \left\{  \left( \frac 1 n + \frac 1 q
-1\right)\sinh\left[x\left(q+\frac 1 n\right)\right] + \frac 1 n 
\sinh\left[x\left(2-q+\frac 1 n\right)\right]\right. + \\ \left.-\left(1-
 \frac 1 q \right)\sinh\left[x\left(q-\frac 1 n\right)\right]\right\}
=c_n(x,q)  A_n(x,q),
\end{multline*}
where
\[
c_n(x,q)={\frac12}\Big[\cosh(x(q-1))\Big]^{-\frac1q-1}
\Big[\cosh x \Big]^{\frac1q+\frac1n-2}>0
\]
and $A_n(x,q)$ is the function in the
braces. $A_n(0,q)=0$ and the claim is proved if we show
that $A_n(\cdot,q)$ has no more than two zeros in $]0,+\infty[$. This
is an immediate consequence of the following. 

\begin{lemma}
  \label{lemmasinh}
 Any nontrivial linear combination of three hyperbolic sinus functions
 has at most two positive zeros. 
\end{lemma}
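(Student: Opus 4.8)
The plan is to turn the statement into one about real exponential sums, for which the bound on the number of zeros is classical. Write the given combination as $g(x)=\sum_{i=1}^{3}a_i\sinh(\alpha_i x)$ with $\alpha_i$ pairwise distinct and positive; if two of the three hyperbolic sines happen to coincide, $g$ is really a combination of at most two of them and the argument below gives an even stronger conclusion, so there is no loss of generality. Using $\sinh t=(e^{t}-e^{-t})/2$ one has $2g(x)=\sum_{i=1}^{3}a_i e^{\alpha_i x}-\sum_{i=1}^{3}a_i e^{-\alpha_i x}$, a nontrivial linear combination of the six exponentials $e^{\pm\alpha_1 x},e^{\pm\alpha_2 x},e^{\pm\alpha_3 x}$, whose exponents $\pm\alpha_1,\pm\alpha_2,\pm\alpha_3$ are six distinct real numbers.

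The key ingredient is the elementary fact that a nontrivial real linear combination $\sum_{j=1}^{m}c_j e^{\beta_j x}$ of exponentials with pairwise distinct real exponents has at most $m-1$ real zeros. I would prove this by induction on $m$: the case $m=1$ is trivial; for the inductive step, factor out $e^{\beta_1 x}$ to rewrite the function as $c_1+\sum_{j=2}^{m}c_j e^{(\beta_j-\beta_1)x}$, whose derivative $\sum_{j=2}^{m}c_j(\beta_j-\beta_1)e^{(\beta_j-\beta_1)x}$ is again a nontrivial combination of $m-1$ exponentials with distinct exponents, hence has at most $m-2$ real zeros by the inductive hypothesis; Rolle's theorem then bounds the number of zeros of $c_1+\sum_{j=2}^{m}c_j e^{(\beta_j-\beta_1)x}$, and therefore of the original function, by $m-1$.

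Applying this with $m=6$ to $2g$ shows that $g$ has at most five real zeros. I then use the parity of $g$: since $g$ is odd, its zero set is symmetric about the origin and contains $0$, so $k$ positive zeros force at least $2k+1$ distinct real zeros. Hence $2k+1\le 5$, i.e.\ $k\le 2$, which is exactly the assertion of the lemma.

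I do not expect a real obstacle here; the only points requiring a little care are checking that the six exponents are genuinely distinct (disposed of by the remark on coincident sines) and stating the exponential-sum lemma at the right level of generality. Everything else is the standard Rolle-type induction, and I note that in the intended application of Lemma~\ref{lemmasinh} the relevant frequencies $q+\frac1n$, $2-q+\frac1n$ and $q-\frac1n$ are automatically positive for $n\ge2$ and $1\le q<1^\ast$.
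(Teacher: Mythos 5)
Your proof is correct, but it takes a genuinely different route from the paper's. You reduce the statement to the classical fact that a nontrivial real linear combination of $m$ exponentials with pairwise distinct real exponents has at most $m-1$ real zeros (the standard Rolle induction), apply it to the six exponentials $e^{\pm\alpha_1 x},e^{\pm\alpha_2 x},e^{\pm\alpha_3 x}$, and then exploit oddness: since $0$ is a zero and positive zeros pair with negative ones, $k$ positive zeros force $2k+1\le 5$, hence $k\le 2$. The bookkeeping is right, the six exponents are indeed distinct when the $\alpha_i$ are distinct and positive, and the degenerate cases (coincident or vanishing frequencies) only improve the bound. The paper instead works directly with $A(x)=X(x)+Y(x)-Z(x)$, where the negative coefficient is isolated and the frequencies are normalized so that $A''=A+\eps X+\delta Y$; it shows that the perturbation $\eps X+\delta Y$ cannot attain a nonnegative maximum on $]0,\infty[$, hence stays positive beyond any nonpositive interior minimum point $x_0$ of $A$, and the resulting differential inequality $A''>A$ on $]x_0,+\infty[$ forces $A$ to vanish at most once more. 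Your argument is more general (it gives at most $k-1$ positive zeros for a combination of $k$ hyperbolic sines) and delegates the work to a well-known lemma; the paper's is a self-contained maximum-principle-type computation tailored to three terms. Either one establishes the lemma.
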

\begin{proof}
  Let $A(x)=a\sinh(\alpha x)+b\sinh(\beta x)+c\sinh(\gamma x)$, with
  $a,b$ and $c$ nonzero real numbers, and $\alpha,\beta,\gamma \ge 0$,
  such that $A\not\equiv 0$. Clearly, if $a,b$ and $c$ have the same
  sign, the claim of the lemma is obvious. Hence, without loss of
  generality we can consider linear combinations as follows: 
  \[
  \begin{array}{llccccc}
  A(x)&=&a \sinh\big(\sqrt{1+\eps}\, x\big) &+&
  b\sinh\big(\sqrt{1+\delta}\,
  x\big)&-&c \sinh x\\[.1cm]
  &=&X(x)&+&Y(x)&-&Z(x).
  \end{array}
 \]
 with $a,b,c> 0$, $\eps > -1$, $\eps \ne 0$ and $\delta \ge
 -1$. Obviously, $A(0)=0$. Moreover,
 \begin{equation}
   \label{sh1}
  A''(x)=A(x)+\eps X(x)+\delta Y(x).
 \end{equation}
Suppose that there exists a nonpositive minimum point  $x_0>0$ of
$A$. Then,
 \[
 A(x_0)\le 0,\quad A''(x_0)\ge 0
 \]
 and, together with \eqref{sh1}, 
 \[
 \eps X(x_0)+\delta Y(x_0) \ge 0.
 \]
 Moreover, the function $\eps X+\delta Y$ cannot have a nonnegative
 maximum in $]0,+\infty[$ being, for any $x> 0$,
 \begin{equation*}
\big[ \eps X(x)+\delta Y(x) \big]''
=\eps(1+\eps)X(x)+\delta(1+\delta)Y(x) > \eps X(x)+ \delta Y(x).
\end{equation*}
Hence,
\begin{equation}
  \label{sh2}
 \eps X(x)+\delta Y(x) > 0\quad\text{for any }x> x_0.
\end{equation}
Finally, \eqref{sh1} and \eqref{sh2} imply that
 \begin{equation*}
 A''(x) > A(x) \quad \text{for any }x> x_0,
 \end{equation*}
and then the function $A(x)$ admits at most one zero in
$]x_0,+\infty[$. 
\end{proof}


\medskip

{\bf Claim 2.} {\em For any given $n\ge 2$, and any $x\in]0,\infty[$, the functions $f_n(x,\cdot)$ and $\partial_x f_n(x,\cdot)$ are decreasing in $[1,1^\ast[$.}

\medskip

\noindent{\it Proof of Claim 2}. Let us compute
\begin{multline*}
  \de_q \log( f_n(x,q) )=\frac{\de}{\de q}
  \left(
    \left(\frac 1 q + \frac 1 n-1\right) \log(\cosh x)-\frac 1 q
    \log(\cosh(x(q-1)) \right) =\\ 
  =
  -\frac{1}{q^2}\big[\log(\cosh x ) - \log(\cosh (x(q-1)))\big] -
  \frac 1 q x \tanh(x(q-1)) <0.  
\end{multline*}
For any fixed $q$, such derivative is decreasing with respect
to $x$ in  $[0,+\infty[$. Indeed, for $x>0$ we have
\begin{equation*}
\de_{q\,x}\,\log( f_n(x,q)) = -\frac{1}{q^2}\tanh x -
 \frac{1}{q^2} \tanh(x(q-1)) - x \left(1-\frac 1 q\right)
 \left[\cosh(x(q-1))\right]^{-2}< 0. 
\end{equation*}

\begin{rem}\label{rem_mon}
Claim 2 has many implications. Among all, we deduce that if $x_0>0$ is
a global minimum point for the function $f_n(\cdot,q)$, for certain
$q=q_0\in[1,1^\ast[$, then for all $q\in ]q_0,1^\ast[$ global minimum
points cannot exist in $[0, x_0[$. Roughly speaking positive global
minimum points move left to the right.  
\end{rem}

\medskip

{\bf Claim 3.} {\em If $1\le q \le \frac 7 4$, the function $f_2(\cdot, q)$ is increasing in $[0,\infty[$.} 

\smallskip

\noindent{\it Proof of Claim 3.} In view of Claim 1 and Claim 2 it is
enough to observe that the function $A_2(\cdot,\frac 74)$ is
positive in $]0,\infty[$. Indeed 
\begin{multline*}
A_2\left(x,\frac 74\right)=\frac1{14}\left\{
  \sinh\left(\frac94x\right)+
  7\sinh\left(\frac34x\right) - 6\sinh\left(\frac54x\right)\right\}\\
=\frac2{7} \sinh^3\left(\frac 14x\right) \left\{ 6\cosh
  x+2\cosh\left(\frac32 x\right)-1 \right\} 
> 2 \sinh^3\left(\frac 14x\right) 
\end{multline*}
for all $x>0$.

\medskip

{\bf Claim 4.} {\em If $\frac 7 4< q < 2$, the function $f_2(\cdot,
  q)$ has a unique local (and global) minimum point $\bar x(q)$ in $[0,\infty[$
  which is not $x=0$.} 

\smallskip

\noindent{\it Proof of Claim 4.} We observe that 

\begin{equation}\label{eq_der2}
\partial_{xx}f_n(0,q)=c_n(0,q)\,\de_x
A_n(0,q)=\left(-q+1+\frac1n+\frac{1}{n^2}\right),
\end{equation}
hence $\partial_{xx}f_2(0,q)<0$ if $q>\frac74$. Hence $x=0$ is not a
local minimum point for $\frac74<q<1^\ast$ and we complete the proof
using Claim 1. 

\begin{rem}\label{rem_case2}
Denoting by $\bar x(q)$ the unique minimum point of $f_2(\cdot,q)$,  $\bar x(q):[1,2[\to[0,\infty[$ is the continuous nondecreasing function represented in Figure \ref{fig3}. \end{rem}


\medskip

{\bf Claim 5.} {\em For any $n\ge 3$ and $1\le q \le 1+\frac 1n $, the
  function $f_n(\cdot, q)$ is increasing in $[0,\infty[$.} 

\smallskip

\noindent{\it Proof of Claim 5.} In view of Claim 2 it is enough to
prove the monotonicity in $[0,\infty[$ 
just for $q=1+\frac 1 n$. In fact we have that
\begin{equation*}
f_n\big(x,1+\textstyle \frac 1 n\big)= 
  \big[\cosh x \big]^\frac{1}{n^2+n} 
  \left[\cosh\left(\dfrac xn\right)\right]^\frac{1}{n+1}.
\end{equation*}


\medskip

{\bf Claim 6.} {\em For any $n\ge 3$ and $1+\frac{1}{n} +\frac{1}{n^2}
  \le q < 1^\ast$, the function $f_n(\cdot, q)$ has a local maximum
  point at $0$ and therefore a unique local (and global) minimum point
  in $]0,\infty[$.} 

\medskip

\noindent {\it Proof of Claim 6.} In view of Claim 2 it is enough to
prove the claim for $q=1+\frac 1 n +\frac{1}{n^2}$. Observe also that
for all $1+\frac 1 n +\frac{1}{n^2}<q<1^\ast$ the statement of Claim 6
is a trivial consequence of \eqref{eq_der2}. Let us therefore consider
$q=\bar q=1+\frac 1 n +\frac{1}{n^2}$. We have that
\[
 \de_x\, f_n(x,\bar q)= c_n(x,\bar q) A_n(x,{\bar q}),
  \]
  with
\begin{multline*}
   A_n(x,{\bar q})= \frac{1}{n(n^2+n+1)}
   \left\{
     \sinh\left(  \frac{ (n+1) ^{2}}{n^2} x \right)
     +
( {n}^{2}+n+1)
 \sinh\left( \frac{{n}^{2}-1
     }{{n}^{2}} x\right)+\right.\\ \left.-
 n(n+1) \sinh \left( \frac{ {n}^{2}+1
     }{{n}^{2}}x\right)\right\}.
 \end{multline*}
 A straightforward computation shows that
 \[
 A_n(0,{\bar q})= \partial_xA_n(0,{\bar q})=\partial_{xx}A_n(0,{\bar
   q})=0, 
 \]
 and
 \[
 \partial_{xxx}A_n(0,{\bar q})=
 4\,\frac{-n^5+3n^3+5n^2+4n+1}{n^6(n^2+n+1)}. 
 \]
 For $n=m+3$, the polynomial in the numerator becomes
 \[
 -m^5-15m^4-87m^3-238m^2-290m-104,
 \]
 which is negative if $m=n-3\ge 0$.
 Then, in this case we have
 \[
 \frac{\de f_n}{\de x} (0,{\bar q})= \frac{\de^2 f_n}{\de x^2} (0,{\bar q})= \frac{\de^3 f_n}{\de x^3}
 (0,{\bar q})=0,\quad\text{and}\quad 
  \frac{\de^4 f_n}{\de x^4} (0,{\bar q})<0,
 \]
 which proves that $x=0$ is a local maximum point for $f_n(x,\bar q)$.


 \medskip

{\bf Claim 7.} {\em For any $n\ge 3$, there exists a $\tilde q\in
  \left]1+\frac{1}{n},1+\frac{1}{n} +\frac{1}{n^2}\right[$ such that
  the function $f_n(\cdot, \tilde q)$ has two global minimum points in
  $[0,\infty[$, one of which at $0$. As a consequence the function
  $f_n(\cdot, q)$ has a unique global minimum point in $[0,\infty[$
  for all $q\in[1,1^\ast[-\{\tilde q\}$. In particular let $\bar x(q)$
  be such a unique minimum point, then $\bar x(q)=0$ for
  $q\in[1,\tilde q[$ and $\displaystyle\lim_{q\to\tilde q^+}\bar
  x(q)>0$ (see Figure \ref{fig4}).} 

\smallskip

\noindent {\it Proof of Claim 7.} We use a continuity argument by
taking advantage of the smoothness 
of $f_n$ in $[0,\infty[\times[1,1^\ast[$. All we have to prove is the
existence of a value $\tilde q$  such that $f_n(\cdot,\tilde q)$ has
more than one global minimum point. Claim 1 and the monotonicity
properties stated in Claim 2 immediately imply the rest of the
statement. 

For any given $n\ge 3$, let $\tilde q$ be the supremum of all
$q\in[1,1^\ast[$ such that the function $f_n(\cdot, q)$ achieves a
global minimum at $x=0$. In view of Claim 5 and Claim 6 we know that
$\tilde q\in \left]1+\frac{1}{n},1+\frac{1}{n}
  +\frac{1}{n^2}\right[$. Since from \eqref{eq_der2} we have
$\partial_{xx}f_n(0,\tilde q)>0$, then by definition of $\tilde q$
there exists $\tilde x>0$ such that $f_n(\tilde x,\tilde
q)=f_n(0,\tilde q)$. Both $x=0$ and $x=\tilde x$ are global minimum
points for $f_n(\cdot,\tilde q)$ in $[0,\infty[$ and the claim is
proved. With Claim 7 the proof of Theorem \ref{maintheo} is complete.

%

\medskip

Concerning Remark \ref{mainrem}, the fact that the radius of the
largest ball in the minimizers increases with $q$ is a consequence of
Claim 2 (see Remark \ref{rem_mon}). Moreover the minimizers degenerate
to one ball as $q\to1^\ast$ since the minimum point of the function
$f_n(x,q)$ diverges as $q\to 1^\ast$. This is a consequence of the
fact that $f_n(x,q)$ converges as $q \to 1^\ast$ (monotonically with
respect to $q$) to 
\[
f_n^\ast(x)=
\Big[\cosh \left(\frac x n\right)\Big]\Big[\cosh  \left(\frac
  x{n-1}\right)\Big]^{-1+\frac 1n},
\]  
and $f_n^\ast(x)$ is in fact decreasing in $[0,\infty[$ in view of
\[
\partial_x f_n^\ast(x)=-\frac{1}{n}\Big[\sinh \left(\frac x
  {n^2-n}\right)\Big]\Big[\cosh  \left(\frac
  x{n-1}\right)\Big]^{-2+\frac 1n}.
\]

\newpage

\begin{figure}[h]
\centering
\def\svgwidth{.95\columnwidth}
  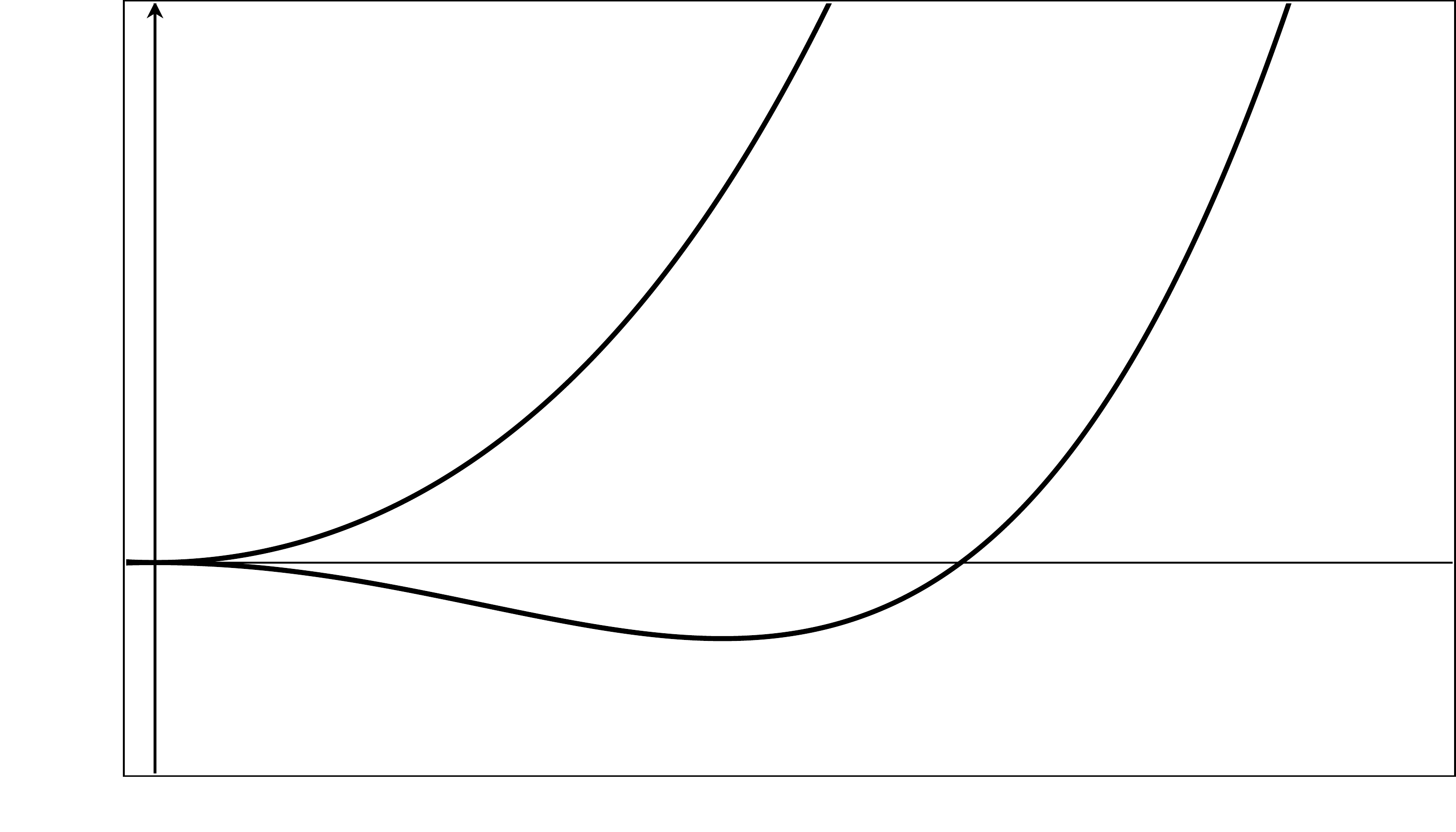\caption{Varying the value of $q$ changes the
    shape of the function $f_2(x,q)$. For $q\le \frac74$ there is only
    one stationary (global minimum) point in the origin, while for $q>
    \frac74$ another stationary point appears, the origin becomes a
    local maximum point and the minimum point shifts on positive
    values.} \label{fig1}
\end{figure}

\begin{figure}[h]
\centering
\def\svgwidth{0.95\columnwidth}
  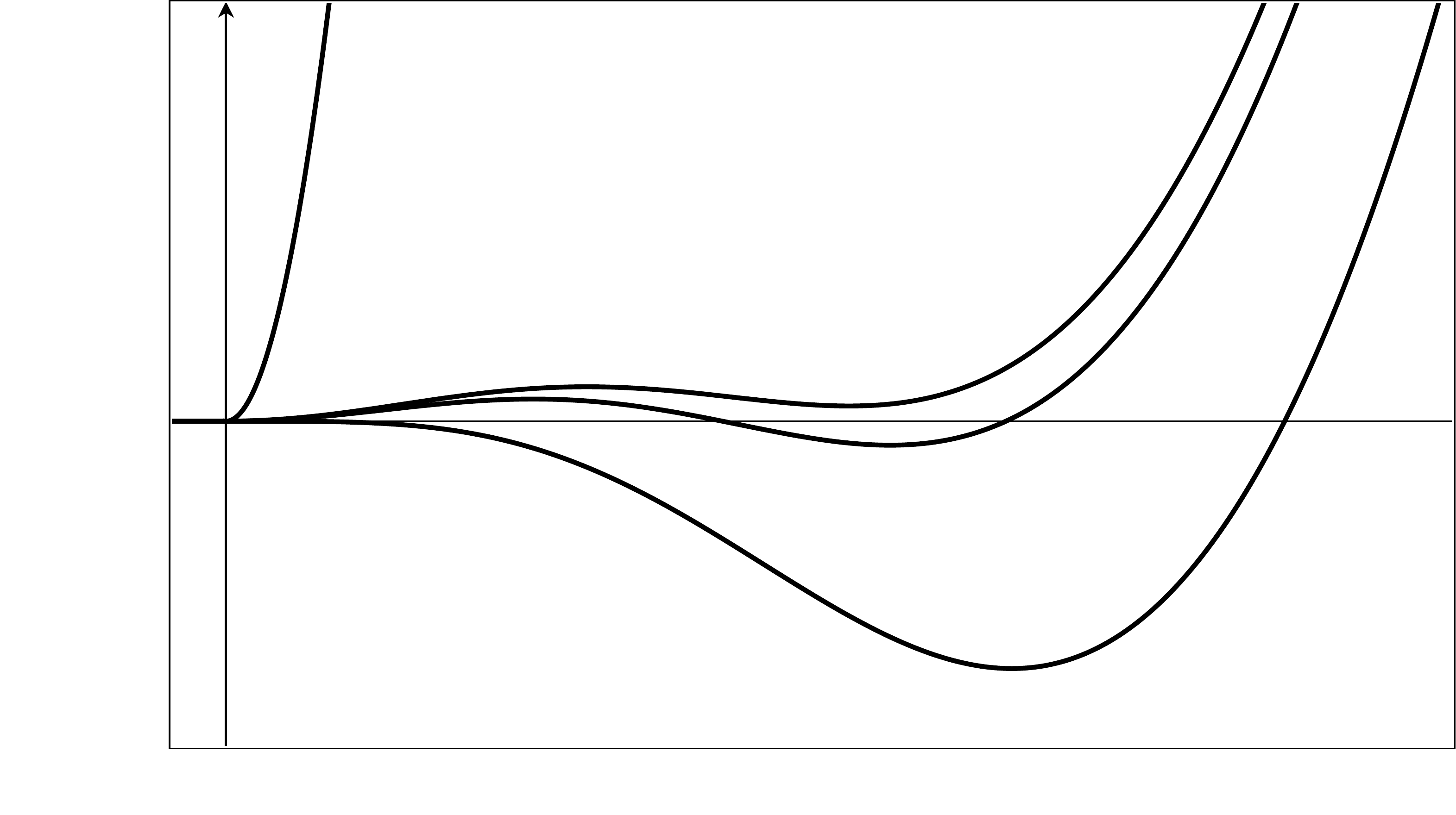\caption{In the picture, we present four different
    behaviors of the function $f_3(x,q)$, corresponding to increasing
    values of $q$, from (a) to (d). In (a) there is only one
    stationary point: a global minimum point at 0. In (b) there are
    three stationary points and the global minimum point is at 0. In
    (c) there are three stationary points and the origin is still a
    local minimum point yet not a global one. In (d) there are two
    stationary points and the origin becomes a local maximum
    point. The picture is about the same for all $n\ge 3$.} \label{fig2}
\end{figure}

\begin{figure}[h]
\centering
\def\svgwidth{1.0\columnwidth}
  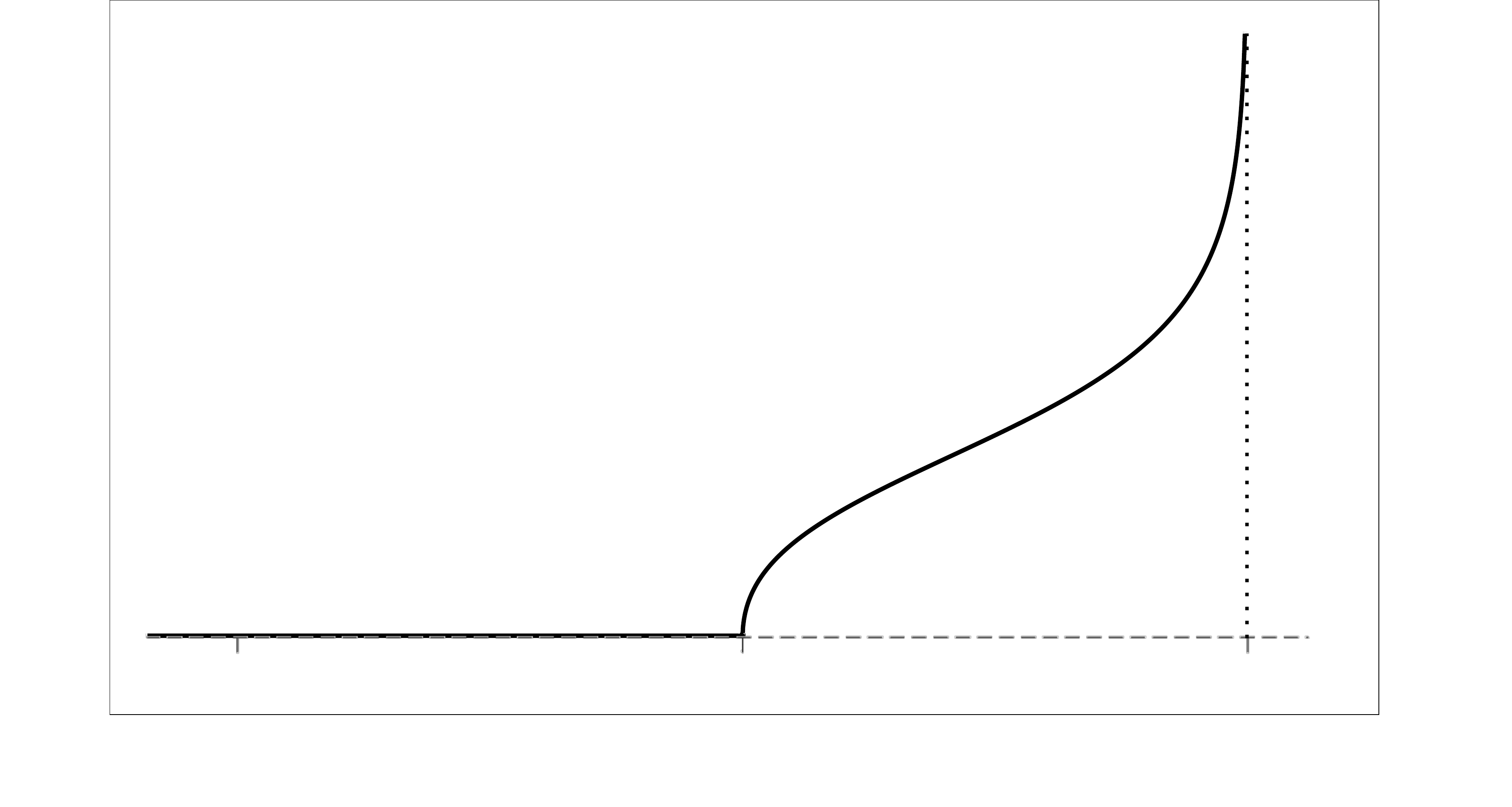\caption{The graph, for $n=2$, of $\bar x(q)$
    (defined in Claim 4). The distance from the origin of the global
    minimum point is  nondecreasing and continuous with respect to
    $q$.} \label{fig3}
\end{figure} 

\begin{figure}[h]
\centering
\def\svgwidth{.95\columnwidth}
  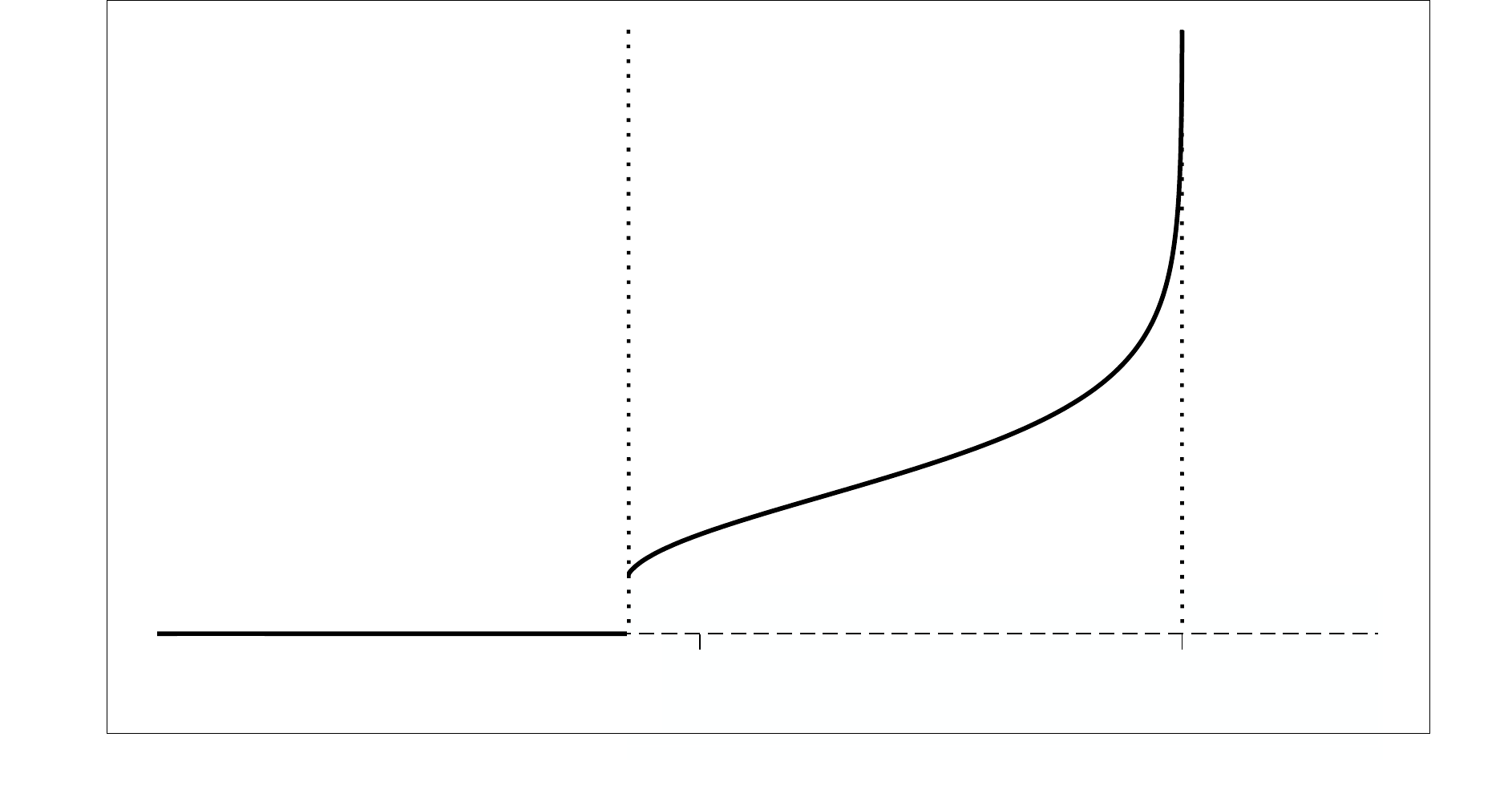\caption{The graph, for $n=3$, of $\bar x(q)$
    (defined in Claim 7). The distance from the origin of the global
    minimum point is  nondecreasing and discontinuous at $q=\tilde
    q$. Here $\bar q=\frac{13}{9}$. The behavior is similar for $n>3$
    with $\bar q=1+\frac1n+\frac1{n^2}$.} \label{fig4}
\end{figure}

\end{document}